\documentclass[12pt]{amsart}
\usepackage{
	amssymb,
	amsfonts,
	bm,
	hyperref,
	mathtools,
	xcolor,
	tikz-cd,
}

\usetikzlibrary{arrows.meta, positioning}
\tikzset{
	none/.style={line width=2mm},
	vertex/.style={circle, fill, inner sep=1pt},
	edge/.style={draw}
}
\pgfdeclarelayer{nodelayer}
\pgfdeclarelayer{edgelayer}
\pgfsetlayers{edgelayer,nodelayer,main}

\usetikzlibrary{decorations.markings}
\tikzset{
	mid arrow/.style={
		postaction={decorate,
			decoration={
				markings,
				mark=at position 0.6 with {\arrow{stealth}}
			}
		}
	}
}

\newtheorem{thm}{Theorem}[section]

\newtheorem{prop}[thm]{Proposition}
\newtheorem{ex}[thm]{Example}
\newtheorem{cor}[thm]{Corollary}
\newtheorem{rem}[thm]{Remark}

\DeclareMathOperator{\tr}{tr}

\newcommand{\Dfn}[1]{\emph{\color{blue}#1}}
\newcommand{\oeis}[1]{\href{https://oeis.org/#1}{#1}}
\newcommand*{\kk}{\Bbbk}
\newcommand\Spec{\mathrm{Spec}}
\newcommand\Aut{\mathrm{Aut}}
\newcommand\SL{\mathrm{SL}}
\newcommand\PSO{\mathrm{PSO}}
\newcommand\Spin{\mathrm{Spin}}
\newcommand\OSp{\mathrm{OSp}}
\newcommand\SU{\mathrm{SU}}
\newcommand\bA{\mathbb{A}}
\newcommand\bC{\mathbb{C}}

\newcommand\bO{\mathbb{O}}
\newcommand\bN{\mathbb{N}}
\newcommand\bQ{\mathbb{Q}}
\newcommand\bZ{\mathbb{Z}}
\newcommand\bR{\mathbb{R}}

\newcommand\fg{\mathfrak{g}}
\newcommand\fsl{\mathfrak{sl}}
\newcommand\fgl{\mathfrak{gl}}
\newcommand\fso{\mathfrak{so}}
\newcommand\fS{\mathfrak{S}}
\newcommand\nn{\mathbf{n}}
\newcommand\der{\mathbf{der}}
\newcommand\str{\mathbf{str}}
\newcommand\cT{\mathcal{T}}
\newcommand\cB{\mathcal{B}}

\begin{document}
\title{Cubic Jordan algebras are not a series}
\author{Bruce Westbury}
\date{\today}
\begin{abstract}
The idea of the exceptional series is that the exceptional simple Lie algebras should form a
series. Since all four simple Lie algebras in the fourth row of the Freudenthal magic square
are exceptional it is natural to ask if the remaining rows form a series. A stronger version
of this question is that, for the first two rows (corresponding to $\bR$ and $\bC$), there is a category defined by a presentation which is a reasonable candidate for the series. Our main results
show that neither of these candidates is a series but each consists of a finite set of points.
In each case the series is defined by a parameter and we show that the relations imply
that this parameter satisfies a polynomial. These two results were obtained by a computer calculation.
Our calculation is supported by the website, \url{www.brucewestbury.uk}, for inspection, and the calculations are certified by Lean 4.
\end{abstract}
\maketitle
\tableofcontents

\tikzset{bwwpicture/.style={rotate=0,baseline={([yshift=-\the\dimexpr\fontdimen22\textfont2\relax]current  bounding  box.center)}, red, ultra thick,scale=.9,line join=round}}
\tikzset{bwwcircle/.style={bwwpicture,execute at begin picture={\path (0,0) circle [radius=1.2];  \fill[bgdfill] circle [radius=1.0];},
		execute at end picture={\draw[bgddraw] circle [radius=1.0];}}}
\tikzset{bwwrect/.style 2 args={bwwpicture,execute at begin picture={\fill[bgdfill] #1 coordinate (a) rectangle #2 coordinate (b); \path (barycentric cs:a=1.11,b=-.11) rectangle (barycentric cs:a=-.11,b=1.11); \clip (a) rectangle (b);},
		execute at end picture={\draw[bgddraw] #1 rectangle #2;}},bwwrect/.default={(0,0)}{(1.5,1.5)}}

\tikzstyle{bgddraw}=[blue, thin]
\tikzstyle{bgdfill}=[fill=blue!20]
\tikzstyle{wipe}=[double distance=1.6pt,double=red,blue!20,line width=3pt]

\def\trivalent{
	\begin{tikzpicture}[bwwcircle]
		\draw  (0,0) -- (270:1);
		\draw  (0,0) -- (30:1);
		\draw  (0,0) -- (150:1);
	\end{tikzpicture}
}

\def\circle{
	\begin{tikzpicture}[bwwcircle]
		\draw  (0,0) circle [radius=0.5];		
	\end{tikzpicture}
}

\def\empty{
	\begin{tikzpicture}[bwwcircle]
	\end{tikzpicture}
}

\def\tadpole{
	\begin{tikzpicture}[bwwcircle]
		\draw  (0,-0.2) -- (270:1);
		\draw  (0,-0.2) to[out=30,in=270] +(45:0.5) arc(0:180:0.3535) to[out=270,in=150] (0,-0.2);
	\end{tikzpicture}
}

\def\bigon{
	\begin{tikzpicture}[bwwcircle]
		\draw  (0,0.5) to [out=210, in=90] (-0.4,0) to [out=270, in=150] (0,-0.5);
		\draw  (0,0.5) to [out=330, in=90] (0.4,0) to [out=270, in=30](0,-0.5);
		\draw  (0,0.5) -- (0,1);
		\draw  (0,-0.5) -- (0,-1);
	\end{tikzpicture}
}

\def\linecirc{
	\begin{tikzpicture}[bwwcircle]
		\draw  (0,1) -- (0,-1);
	\end{tikzpicture}
}

\def\trianglecirc{
	\begin{tikzpicture}[bwwcircle]
		\draw  (90:0.5) -- (90:1);
		\draw  (210:0.5) -- (210:1);
		\draw  (330:0.5) -- (330:1);
		\draw  (90:0.5) to [out=210, in=90] (210:0.5);
		\draw  (210:0.5) to [out=330, in=210] (330:0.5);
		\draw  (330:0.5) to [out=90, in=330] (90:0.5);
	\end{tikzpicture}
}

\def\Xcirc{
	\begin{tikzpicture}[bwwcircle]
		\draw  (45:1) -- (225:1);
		\draw  (135:1) -- (315:1);
	\end{tikzpicture}
}

\def\Icirc{
	\begin{tikzpicture}[bwwcircle]
		\draw  (45:1) to [out=225,in=135] (315:1);
		\draw  (135:1) to [out=315,in=45] (225:1);
	\end{tikzpicture}
}

\def\Ucirc{
	\begin{tikzpicture}[bwwcircle]
		\draw  (45:1) to [out=225,in=315] (135:1);
		\draw  (315:1) to [out=135,in=45] (225:1);
	\end{tikzpicture}
}

\def\Kcirc{
	\begin{tikzpicture}[bwwcircle]
		\draw  (45:1) to [out=225,in=30] (0,0.3);
		\draw  (135:1) to [out=315,in=150] (0,0.3);
		\draw  (225:1) to [out=45,in=210] (0,-0.3);
		\draw  (315:1) to [out=135,in=330] (0,-0.3);
		\draw  (0,0.3) -- (0,-0.3);
	\end{tikzpicture}
}

\def\Hcirc{
	\begin{tikzpicture}[bwwcircle]
		\draw  (45:1) to [out=225,in=60] (0.3,0);
		\draw  (135:1) to [out=315,in=120] (-0.3,0);
		\draw  (225:1) to [out=45,in=240] (-0.3,0);
		\draw  (315:1) to [out=135,in=300] (0.3,0);
		\draw  (0.3,0) -- (-0.3,0);
	\end{tikzpicture}
}

\def\squarecirc{
	\begin{tikzpicture}[bwwcircle]
		\draw  (45:0.5) -- (45:1);
		\draw  (135:0.5) -- (135:1);
		\draw  (225:0.5) -- (225:1);
		\draw  (315:0.5) -- (315:1);
		\draw  (45:0.5) to [out=165,in=15] (135:0.5);
		\draw  (135:0.5) to [out=255,in=105] (225:0.5);
		\draw  (225:0.5) to[out=345,in=195] (315:0.5);
		\draw  (315:0.5) to [out=75,in=285] (45:0.5);
	\end{tikzpicture}
}

\def\twosquare{
	\begin{tikzpicture}[bwwcircle]
		\def\x{0.3}
		\coordinate (A) at (0,\x) {};
		\coordinate (B) at (0,-\x) {};
		\draw  (45:0.5) -- (45:1);
		\draw  (135:0.5) -- (135:1);
		\draw  (225:0.5) -- (225:1);
		\draw  (315:0.5) -- (315:1);
		\draw  (45:0.5) -- (A) -- (135:0.5);
		\draw  (135:0.5) -- (225:0.5);
		\draw  (225:0.5) -- (B) -- (315:0.5);
		\draw  (315:0.5) -- (45:0.5);
		\draw (A) -- (B);
	\end{tikzpicture}
}

\def\Xpos{
	\begin{tikzpicture}[bwwcircle]
		\posdot{0,0}; %Not defined
		\draw (45:1) -- (225:1);
		\draw	(135:1) -- (315:1);
	\end{tikzpicture}
}

\def\Acirc{
	\begin{tikzpicture}[bwwcircle]
		\draw (45:1) to [out=225,in=105] (225:0.5);
		\draw (135:1) to [out=315,in=75] (315:0.5);
		\draw (225:0.5) to [out=345,in=195] (315:0.5);
		\draw (225:0.5) -- (225:1);
		\draw (315:0.5) -- (315:1);
	\end{tikzpicture}
}

\def\overlaptwotwo{
	\begin{tikzpicture}[bwwcircle]
		\coordinate (A) at (0,0.5);
		\coordinate (B) at (0,-0.5);
		\draw (A) -- (B);
		\draw (A) to[out=150,in=90] (-0.5,0) to [out=270,in=210] (B);
		\draw (A) to[out=30,in=90] (0.5,0) to [out=270,in=330] (B);
	\end{tikzpicture}
}

\def\overlaptwothree{
	\begin{tikzpicture}[bwwcircle]
		\coordinate (A) at (120:0.5);
		\coordinate (B) at (240:0.5);
		\coordinate (C) at (0:0.5);
		\draw (A) to[out=90,in=120] (C);
		\draw (B) to[out=270,in=240] (C);
		\draw (C) -- (0:1);
		\draw (A) to[out=330,in=30] (B);
		\draw (A) to[out=210,in=120] (B);
	\end{tikzpicture}
}

\def\overlaptwofour{
	\begin{tikzpicture}[bwwcircle]
		\coordinate (A) at (45:0.5);
		\coordinate (B) at (135:0.5);
		\coordinate (C) at (225:0.5);
		\coordinate (D) at (315:0.5);
		\draw (A) -- (45:1);
		\draw (D) -- (315:1);
		\draw (A) -- (B);
		\draw (B) to[out=330,in=30] (C);
		\draw (B) to[out=210,in=120] (C);
		\draw (C) -- (D) -- (A);
	\end{tikzpicture}
}

\def\overlapthreethree{
	\begin{tikzpicture}[bwwcircle]
		\coordinate (A) at (0:0.5);
		\coordinate (B) at (90:0.5);
		\coordinate (C) at (180:0.5);
		\coordinate (D) at (270:0.5);
		\draw (A) -- (B) -- (C) -- (D) -- cycle;
		\draw (A) -- (1,0) (B) -- (D) (C) -- (-1,0);
	\end{tikzpicture}
}

\def\overlapthreefour{
	\begin{tikzpicture}[bwwcircle]
		\coordinate (A) at (0:0.5);
		\coordinate (B) at (72:0.5);
		\coordinate (C) at (144:0.5);
		\coordinate (D) at (216:0.5);
		\coordinate (E) at (288:0.5);
		\draw (A) -- (B) -- (C) -- (D) -- (E) -- cycle;
		\draw (A) -- (1,0) (B) -- (E) (C) -- (120:1) (D) -- (240:1);
	\end{tikzpicture}
}

\def\overlapfourfour{
	\begin{tikzpicture}[bwwcircle]
		\coordinate (A) at (30:0.5);
		\coordinate (B) at (90:0.5);
		\coordinate (C) at (150:0.5);
		\coordinate (D) at (210:0.5);
		\coordinate (E) at (270:0.5);
		\coordinate (F) at (330:0.5);
		\draw (A) -- (B) -- (C) -- (D) -- (E) -- (F) -- cycle;
		\draw (B) -- (E) (A) -- (45:1) (C) -- (135:1) (D) -- (225:1) (F) -- (315:1);
	\end{tikzpicture}
}

\def\overlaptwotwoA{
	\begin{tikzpicture}[bwwcircle]
		\coordinate (A) at (0,0.5);
		\coordinate (B) at (0,-0.5);
		\draw (A) -- (B) (0,0) -- (1,0);
		\draw (A) to[out=150,in=90] (-0.5,0) to [out=270,in=210] (B);
		\draw (A) to[out=30,in=90] (0.5,0) to [out=270,in=330] (B);
	\end{tikzpicture}
}

\def\overlaptwothreeA{
	\begin{tikzpicture}[bwwcircle]
		\coordinate (A) at (120:0.5);
		\coordinate (B) at (240:0.5);
		\coordinate (C) at (0:0.5);
		\draw (A) to[out=90,in=120] (C);
		\draw (B) to[out=270,in=240] (C);
		\draw (C) -- (0:1);
		\draw (A) to[out=210,in=120] (B);
		\draw (A) to[out=210,in=90] (0,0) to[out=270,in=120] (B);
		\draw (0,0) -- (-1,0);
	\end{tikzpicture}
}

\def\overlapthreethreeA{
	\begin{tikzpicture}[bwwcircle]
		\coordinate (A) at (0:0.5);
		\coordinate (B) at (90:0.5);
		\coordinate (C) at (180:0.5);
		\coordinate (D) at (270:0.5);
		\draw (A) -- (B) -- (C) -- (D) -- cycle;
		\draw (A) -- (1,0) (B) -- (D) (C) -- (-1,0) (0,0) -- (45:1);
	\end{tikzpicture}
}

\def\overlaptwotwoB{
	\begin{tikzpicture}[bwwcircle]
		\coordinate (A) at (0,0.5);
		\coordinate (B) at (0,-0.5);
		\draw (A) -- (B);
		\draw (A) to[out=150,in=90] (-0.5,0) to [out=270,in=210] (B);
		\draw (A) to[out=30,in=90] (0.5,0) to [out=270,in=330] (B);
		\draw (0,0.2) -- (45:1) (0,-0.2) -- (215:1);
	\end{tikzpicture}
}

\def\curl{
	\begin{tikzpicture}[bwwcircle]
		\draw (0.25,0.45) arc (0:180:0.25) to[out=270,in=135] (0,0) -- (315:1);
		\draw (225:1) -- (0,0) to[out=225,in=270] (0.25,0.45);
	\end{tikzpicture}
}

\def\twist{
	\begin{tikzpicture}[bwwcircle]
		\draw  (30:1) to [out=210,in=45] (0,0.2) to [out=225, in=180] (0,-0.5);
		\draw (150:1) to [out=330,in=135] (0,0.2) to [out=-45, in=0] (0,-0.5);
		\draw  (0,-0.5) -- (0,-1);
	\end{tikzpicture}
}

%------------- Directed --------------------

\def\source{
	\begin{tikzpicture}[bwwcircle]
		\draw[mid arrow] (0,0) -- (270:1);
		\draw[mid arrow] (0,0) -- (30:1);
		\draw[mid arrow] (0,0) -- (150:1);
	\end{tikzpicture}
}

\def\sink{
	\begin{tikzpicture}[bwwcircle]
		\draw[mid arrow] (270:1) -- (0,0);
		\draw[mid arrow] (30:1) -- (0,0);
		\draw[mid arrow] (150:1) -- (0,0);
	\end{tikzpicture}
}

\def\circledir{
	\begin{tikzpicture}[bwwcircle]
		\draw[mid arrow]  (0,0) circle [radius=0.5];		
	\end{tikzpicture}
}

\def\bigondir{
	\begin{tikzpicture}[bwwcircle]
	\draw[mid arrow]  (0,0.5) to [out=210, in=90] (-0.4,0) to [out=270, in=150] (0,-0.5);
	\draw[mid arrow]  (0,0.5) to [out=330, in=90] (0.4,0) to [out=270, in=30](0,-0.5);
	\draw[mid arrow]  (0,0.5) -- (0,1);
	\draw[mid arrow]  (0,-1) -- (0,-0.5);
\end{tikzpicture}
}

\def\linedir{
\begin{tikzpicture}[bwwcircle]
	\draw[mid arrow]  (0,-1) -- (0,1);
\end{tikzpicture}
}

\def\squaredir{
	\begin{tikzpicture}[bwwcircle]
		\draw[mid arrow]  (45:1) -- (45:0.5);
		\draw[mid arrow]  (135:0.5) -- (135:1);
		\draw[mid arrow]  (225:1) -- (225:0.5);
		\draw[mid arrow]  (315:0.5) -- (315:1);
		\draw[mid arrow]  (135:0.5) -- (45:0.5); %to [out=165,in=15] (135:0.5);
		\draw[mid arrow]  (135:0.5) to [out=255,in=105] (225:0.5);
		\draw[mid arrow]  (315:0.5) -- (225:0.5); %to[out=345,in=195] (315:0.5);
		\draw[mid arrow]  (315:0.5) to [out=75,in=285] (45:0.5);
	\end{tikzpicture}
}

\def\Adir{
	\begin{tikzpicture}[bwwcircle]
		\draw[mid arrow] (45:1) to [out=225,in=105] (225:0.5);
		\draw[mid arrow] (315:0.5) to [out=60,in=315] (135:1); %(135:1) to [out=315,in=75] (315:0.5);
		\draw[mid arrow] (315:0.5) -- (225:0.5); %(225:0.5) to [out=345,in=195] (315:0.5);
		\draw[mid arrow] (225:1) -- (225:0.5);
		\draw[mid arrow] (315:0.5) -- (315:1);
	\end{tikzpicture}
}

\def\Idir{
	\begin{tikzpicture}[bwwcircle]
		\draw[mid arrow]  (45:1) to [out=225,in=135] (315:1);
		\draw[mid arrow]  (225:1) to [out=45,in=315] (135:1);
	\end{tikzpicture}
}

\def\Udir{
	\begin{tikzpicture}[bwwcircle]
		\draw[mid arrow]  (45:1) to [out=225,in=315] (135:1);
		\draw[mid arrow]  (225:1) to [out=45,in=135] (315:1);
	\end{tikzpicture}
}

\def\Wa{
	\begin{tikzpicture}[bwwcircle]
		\coordinate (b0) at (90:1);
		\coordinate (b1) at (215:1);
		\coordinate (b2) at (255:1);
		\coordinate (b3) at (285:1);
		\coordinate (b4) at (325:1);
		\coordinate (v0) at (90:0.5);
		\coordinate (v1) at (-0.4,0);
		\coordinate (v2) at (0.4,0);
		\draw[mid arrow] (b0) -- (v0);
		\draw[mid arrow] (v0) -- (v1);
		\draw[mid arrow] (v0) -- (v2);
		\draw[mid arrow] (v1) -- (b1);
		\draw[mid arrow] (v1) -- (b2);
		\draw[mid arrow] (v2) -- (b3);
		\draw[mid arrow] (v2) -- (b4);
	\end{tikzpicture}
}

\def\Wb{
	\begin{tikzpicture}[bwwcircle]
		\coordinate (b0) at (90:1);
		\coordinate (b1) at (215:1);
		\coordinate (b2) at (255:1);
		\coordinate (b3) at (285:1);
		\coordinate (b4) at (325:1);
		\coordinate (v0) at (90:0.5);
		\coordinate (v1) at (-0.4,0);
		\coordinate (v2) at (0.4,0);
		\draw[mid arrow] (b0) -- (v0);
		\draw[mid arrow] (v0) -- (v1);
		\draw[mid arrow] (v0) -- (v2);
		\draw[mid arrow] (v1) -- (b1);
		\draw[mid arrow] (v1) -- (b3);
		\draw[mid arrow] (v2) -- (b2);
		\draw[mid arrow] (v2) -- (b4);
	\end{tikzpicture}
}

\def\Wc{
	\begin{tikzpicture}[bwwcircle]
		\coordinate (b0) at (90:1);
		\coordinate (b1) at (215:1);
		\coordinate (b2) at (255:1);
		\coordinate (b3) at (285:1);
		\coordinate (b4) at (325:1);
		\coordinate (v0) at (90:0.5);
		\coordinate (v1) at (-0.4,0);
		\coordinate (v2) at (0.4,0);
		\draw[mid arrow] (b0) -- (v0);
		\draw[mid arrow] (v0) -- (v1);
		\draw[mid arrow] (v0) -- (v2);
		\draw[mid arrow] (v1) -- (b1);
		\draw[mid arrow] (v1) -- (b4);
		\draw[mid arrow] (v2) -- (b2);
		\draw[mid arrow] (v2) -- (b3);
	\end{tikzpicture}
}

\def\Vu{
	\begin{tikzpicture}[bwwcircle]
		\coordinate (b0) at (90:1);
		\coordinate (b1) at (215:1);
		\coordinate (b2) at (255:1);
		\coordinate (b3) at (285:1);
		\coordinate (b4) at (325:1);
		\coordinate (v0) at (90:0.5);
		\draw[mid arrow] (b0) -- (b1);
		\draw[mid arrow] (v0) -- (b2);
		\draw[mid arrow] (v0) -- (b3);
		\draw[mid arrow] (v0) -- (b4);
	\end{tikzpicture}
}

\def\Vv{
	\begin{tikzpicture}[bwwcircle]
		\coordinate (b0) at (90:1);
		\coordinate (b1) at (215:1);
		\coordinate (b2) at (255:1);
		\coordinate (b3) at (285:1);
		\coordinate (b4) at (325:1);
		\coordinate (v0) at (90:0.5);
		\draw[mid arrow] (b0) -- (b2);
		\draw[mid arrow] (v0) -- (b1);
		\draw[mid arrow] (v0) -- (b3);
		\draw[mid arrow] (v0) -- (b4);
	\end{tikzpicture}
}

\def\Vw{
	\begin{tikzpicture}[bwwcircle]
		\coordinate (b0) at (90:1);
		\coordinate (b1) at (215:1);
		\coordinate (b2) at (255:1);
		\coordinate (b3) at (285:1);
		\coordinate (b4) at (325:1);
		\coordinate (v0) at (90:0.5);
		\draw[mid arrow] (b0) -- (b3);
		\draw[mid arrow] (v0) -- (b1);
		\draw[mid arrow] (v0) -- (b2);
		\draw[mid arrow] (v0) -- (b4);
	\end{tikzpicture}
}

\def\Vx{
	\begin{tikzpicture}[bwwcircle]
		\coordinate (b0) at (90:1);
		\coordinate (b1) at (215:1);
		\coordinate (b2) at (255:1);
		\coordinate (b3) at (285:1);
		\coordinate (b4) at (325:1);
		\coordinate (v0) at (90:0.5);
		\draw[mid arrow] (b0) -- (b4);
		\draw[mid arrow] (v0) -- (b1);
		\draw[mid arrow] (v0) -- (b2);
		\draw[mid arrow] (v0) -- (b3);
	\end{tikzpicture}
}

\def\overlaptwotwodir{
	\begin{tikzpicture}[bwwcircle]
		\coordinate (A) at (0,0.5);
		\coordinate (B) at (0,-0.5);
		\draw[mid arrow] (A) -- (B);
		\draw[mid arrow] (A) to[out=150,in=90] (-0.5,0) to [out=270,in=210] (B);
		\draw[mid arrow] (A) to[out=30,in=90] (0.5,0) to [out=270,in=330] (B);
	\end{tikzpicture}
}

\def\overlaptwofourdir{
	\begin{tikzpicture}[bwwcircle]
		\coordinate (A) at (45:0.5);
		\coordinate (B) at (135:0.5);
		\coordinate (C) at (225:0.5);
		\coordinate (D) at (315:0.5);
		\draw[mid arrow] (45:1) -- (A);
		\draw[mid arrow] (D) -- (315:1);
		\draw[mid arrow] (B) -- (A);
		\draw[mid arrow] (B) to[out=330,in=30] (C);
		\draw[mid arrow] (B) to[out=210,in=120] (C);
		\draw[mid arrow] (D) -- (C);
		\draw[mid arrow] (D) -- (A);
	\end{tikzpicture}
}

\def\overlapfourfourdir{
	\begin{tikzpicture}[bwwcircle]
		\coordinate (A) at (30:0.5);
		\coordinate (B) at (90:0.5);
		\coordinate (C) at (150:0.5);
		\coordinate (D) at (210:0.5);
		\coordinate (E) at (270:0.5);
		\coordinate (F) at (330:0.5);
		\draw[mid arrow] (A) -- (B);
		\draw[mid arrow] (C) -- (B);
		\draw[mid arrow] (C) -- (D);
		\draw[mid arrow] (E) -- (D);
		\draw[mid arrow] (E) -- (F);
		\draw[mid arrow] (A) -- (F);
		\draw[mid arrow] (E) -- (B);
		\draw[mid arrow] (A) -- (45:1);
		\draw[mid arrow] (C) -- (135:1);
		\draw[mid arrow] (225:1) -- (D);
		\draw[mid arrow] (315:1) -- (F);
	\end{tikzpicture}
}

\def\overlapthreethreeAdir{
	\begin{tikzpicture}[bwwcircle]
		\coordinate (A) at (0:0.5);
		\coordinate (B) at (90:0.5);
		\coordinate (C) at (180:0.5);
		\coordinate (D) at (270:0.5);
		\draw[mid arrow] (A) -- (B);
		\draw[mid arrow] (C) -- (B);
		\draw[mid arrow] (C) -- (D);
		\draw[mid arrow] (A) -- (D);
		\draw[mid arrow] (A) -- (1,0);
		\draw[mid arrow] (0,0) -- (B);
		\draw[mid arrow] (0,0) -- (D);
		\draw[mid arrow] (C) -- (-1,0);
		\draw[mid arrow] (0,0) -- (45:1);
	\end{tikzpicture}
}

\def\overlaptwotwoBdir{
	\begin{tikzpicture}[bwwcircle]
		\coordinate (A) at (0,0.5);
		\coordinate (B) at (0,-0.5);
		\draw[mid arrow] (B) -- (A);
		\draw[mid arrow] (A) to[out=150,in=90] (-0.5,0) to [out=270,in=210] (B);
		\draw[mid arrow] (A) to[out=30,in=90] (0.5,0) to [out=270,in=330] (B);
		\draw[mid arrow] (45:1) -- (0,0.2);
		\draw[mid arrow] (0,-0.2) -- (215:1);
	\end{tikzpicture}
}

%------------- Redundant --------------------

\def\leftover{
	\begin{tikzpicture}[bwwcircle,rotate=90]
		\draw (45:1) to [out=225,in=105] (225:0.5);
		\draw [wipe] (135:1) to [out=315,in=75] (315:0.5);
		\draw (225:0.5) to [out=345,in=195] (315:0.5);
		\draw (225:0.5) -- (225:1);
		\draw (315:0.5) -- (315:1);
	\end{tikzpicture}
}

\def\botover{
	\begin{tikzpicture}[bwwcircle,rotate=180]
		\draw (45:1) to [out=225,in=105] (225:0.5);
		\draw [wipe] (135:1) to [out=315,in=75] (315:0.5);
		\draw (225:0.5) to [out=345,in=195] (315:0.5);
		\draw (225:0.5) -- (225:1);
		\draw (315:0.5) -- (315:1);
	\end{tikzpicture}
}

\def\rightover{
	\begin{tikzpicture}[bwwcircle,rotate=270]
		\draw  (45:1) to [out=225,in=105] (225:0.5);
		\draw [wipe] (135:1) to [out=315,in=75] (315:0.5);
		\draw  (225:0.5) to [out=345,in=195] (315:0.5);
		\draw  (225:0.5) -- (225:1);
		\draw  (315:0.5) -- (315:1);
	\end{tikzpicture}
}

\def\topoverbar{
	\begin{tikzpicture}[bwwcircle,rotate=180,xscale=-1]
		\draw (45:1) to [out=225,in=105] (225:0.5);
		\draw [wipe] (135:1) to [out=315,in=75] (315:0.5);
		\draw (225:0.5) to [out=345,in=195] (315:0.5);
		\draw (225:0.5) -- (225:1);
		\draw (315:0.5) -- (315:1);
	\end{tikzpicture}
}

\def\pentagon{
	\begin{tikzpicture}[bwwcircle]
		\def\x{0.5}
		\draw (0:1) -- (0:\x) (72:1) -- (72:\x) (144:1) -- (144:\x) (216:1) -- (216:\x) (288:1) -- (288:\x);
		\draw  (0:\x) -- (72:\x) -- (144:\x) -- (216:\x) -- (288:\x) -- cycle; 
	\end{tikzpicture}
}

\def\tree#1{
	\begin{tikzpicture}[bwwcircle,rotate={#1}]
		\def\x{0.25}
		\coordinate (A) at (-\x+0.1,\x) {};
		\coordinate (B) at (\x+0.1,0) {};
		\coordinate (C) at (-\x+0.1,-\x) {};
		\draw (A) -- (B) -- (C);
		\draw (0:1) -- (B) (72:1) -- (A) (144:1) -- (A) (216:1) -- (C) (288:1) -- (C);
	\end{tikzpicture}
}

\def\fivesq#1{
	\begin{tikzpicture}[bwwcircle,rotate={#1}]
		\def\x{0.35}
		\coordinate (A) at (-0.5,0) {};
		\coordinate (N) at (\x,\x) {};
		\coordinate (S) at (\x,-\x) {};
		\coordinate (E) at (2*\x,0) {};
		\coordinate (W) at (0,0) {};
		\draw (N) -- (E) -- (S) -- (W) -- cycle;
		\draw (A) -- (W) (A) -- (144:1) (A) -- (216:1);
		\draw (0:1) -- (E) (72:1) -- (N) (288:1) -- (S);
	\end{tikzpicture}
}

\def\twothree#1{
	\begin{tikzpicture}[bwwcircle,rotate={#1}]
		\coordinate (A) at (0:0.1) {};
		\draw (144:1) to[out=324,in=36] (216:1);
		\draw (0:1) to[out=180,in=0] (A) (72:1) to[out=252,in=120] (A) (288:1) to[out=108,in=240] (A);
	\end{tikzpicture}
}

%%% Local Variables:
%%% TeX-master: "f4_closed.tex"
%%% End:

%============= 
% Introduction
%=============
\section{Introduction}
The classification of complex simple Lie algebras is due to Killing in 1888. The classification consists of four infinite families, known as the classical Lie algebras, and five others, known as the exceptional Lie algebras.

A more recent development was the discovery of the Vogel plane, partially published in \cite{Vogel2011}.
Vogel proved (independently of Killing's classification) that the complement of the trivial representation in the symmetric square of the adjoint representation of a simple Lie algebra (other than $\fsl(2)$) has either two or three composition factors. Furthermore, for the exceptional algebras, the number of composition factors is two.
The four infinite families of classical Lie algebras are often regarded as forming a series; for example,
the ring of symmetric functions can be regarded as the character ring of $\fgl(n)$ where $n$ is a formal variable. The radical idea in Vogel's work is that the exceptional Lie algebras might also form 
a series.

In earlier work,\cite{Freudenthal1964}, Freudenthal gave a uniform matrix construction of the exceptional
Lie algebras. This construction is now known as the Freudenthal magic square. The fourth row of the magic square consists of four of the exceptional Lie algebras. Since Vogel had proposed that the fourth row might form
a series it is natural to ask about the remaining rows. % This idea was investigated in \cite{Deligne2002}.
The result announced in \cite{Thurston2004} is the negative answer that the first two rows of the magic square do not form a series. This result was obtained by a computer calculation which cannot be inspected and which has not been reproduced.
This work reconstructs the calculation from first principles using diagrammatic reduction rules. produces a transparent reduction system, and supplies independently checkable certificates for the computation.

This work is supported by the website \url{www.brucewestbury.uk}
which has the role of an Appendix and presents details of the calculations.
This work is further supported by Lean code which independently certifies the reduction steps.
The coding was done by ChatGPT and Claude.

%Warning: The $\nn$ for the derivation algebra and the $\nn$ for the structure algebra are different.

\subsection*{Acknowledgment}
The author would like to thank Dylan Thurston for valuable conversations.

\section{Cubic Jordan algebras}\label{sec:cubic}
The classical Lie algebras all have matrix constructions; so the classification of simple Lie algebras
raised the problem of giving matrix constructions of the exceptional Lie algebras.
The Lie algebra $G_2$ was shown to be the derivation algebra of the octonions by Elie Cartan in 1913. Then matrix constructions of $F_4$ and $E_6$ were given in \cite{freudenthal1953} by constructing actions on the
Jordan algebra of $3\times 3$ Hermitian matrices with entries in the octonions. This algebra is known as
the Albert algebra.

%The original papers are \cite{chevalley1950}, \cite{tits1966}, \cite{freudenthal1953}.
 
%A more geometric interpretation is given in \cite[\S4.2,\S4.4]{baez2002}.

\subsection{Construction}
An accessible account of these constructions is given in \cite{adams1982}.
Let $\bA$ be a (unital) \Dfn{composition algebra} with involution $x\mapsto \overline{x}$ and norm $N(x)\coloneqq x\overline{x}$. The condition for a composition algebra is
\begin{equation*}
	N(xy) = N(x)N(y)
\end{equation*}

An element $x\in \bA$ is \Dfn{real} if $\overline{x}=x$ and \Dfn{imaginary} if
$\overline{x}=-x$. Every $x\in \bA$ has a real part $\Re(x)\coloneqq \frac12(x+\overline{x})$
and an imaginary part $\Im(x)\coloneqq \frac12(x-\overline{x})$ with $x=\Re(x)+\Im(x)$.

Let $\bA$ be a composition algebra of dimension $m\in \{1,2,4,8\}$.
Let $H_3(\bA)$ be the space of Hermitian $3\times 3$ matrices with entries in $\bA$.
The space $H_3(\bA)$ has dimension $3m+3$.

Define the Jordan product of $X,Y\in H_3(\bA)$ by
\begin{equation*}
	X \circ Y \coloneqq \frac12 (XY+YX)
\end{equation*}
It is clear that the Jordan product is commutative and power associative; also, the unit matrix is a unit for the Jordan product.
The \Dfn{Jordan identity} is $x(x^2y) = x^2(xy)$ for all $x,y$. 
The Jordan product also satisfies the Jordan identity.
This is clear if $\bA$ is associative, and, remarkably, also holds for $\bA=\bO$, the octonions. 

The Jordan algebra $H_3(\bA)$ is a cubic Jordan algebra. An element $X\in H_3(\bA)$
can be written as
\begin{equation*}
	X=\begin{pmatrix}
		\alpha & x & \overline{z} \\
		\overline{x} & \beta & y \\
		z & \overline{y} & \gamma
	\end{pmatrix}
\end{equation*}
where the diagonal entries $\alpha,\beta,\gamma$ are scalars and the remaining entries are in $\bA$.
The \Dfn{norm}, $N$, is given explicitly by
\begin{equation*}
	N(X) = \alpha\beta\gamma - \alpha N(y) - \beta N(z) - \gamma N(x) + \Re ((xy)z)
\end{equation*}

Define a symmetric inner product using the matrix trace by
\begin{equation*}
	(X,Y) = \tr (X\circ Y)
\end{equation*}

The properties of the inner product are that it is non-degenerate and invariant where invariance
is the condition that,for $X,Y,Z\in H_3(\bA)$,
\begin{equation*}
	(X,Y\circ Z) = (X\circ Y,Z)
\end{equation*}

Let $\fg_\bA$ be the derivation algebra of $H_3(\bA)$. Then $\fg_\bA$ preserves the inner product.
Let $H'_3(\bA)\subset H_3(\bA)$ be the subspace of trace-free matrices. Then $H_3(\bA)$ decomposes
as a sum of irreducible $\fg_\bA$-modules
\begin{equation*}
	H_3(\bA) \cong H'_3(\bA) \oplus \bC
\end{equation*}
where $\bC$ is the subspace of scalar matrices.
The projection map $H_3(\bA) \to H'_3(\bA)$, $a\mapsto a-\frac13 \tr(a)$ is a map of $\fg_\bA$-modules.

Define a product on $H'_3(\bA)$ by
\begin{equation*}
	X.Y = X\circ Y - \frac13 \tr(X\circ Y)
\end{equation*}
This is a map of $\fg_\bA$-modules, $H'_3(\bA)\otimes H'_3(\bA)\to H'_3(\bA)$.

Define $N(X)$, the \Dfn{norm} of $X\in H'(\bA)$, by $N(X) = (X,X)$.
Then the \Dfn{fundamental identity} is that, for $X\in H'_3(\bA)$.
\begin{equation*}
	N(X)^2 = N(X^2)
\end{equation*}

\subsection{Cubic forms}\label{sec:forms}
The abstract theory of cubic Jordan algebras is given in the text books
\cite{jacobson1968}, \cite{springer1997}, \cite{knus1998}, \cite[Part~II~4]{mccrimmon2004}, \cite[VI]{garibaldi2024}.

A \Dfn{Jordan algebra} is a commutative algebra that satisfies the Jordan identity.
The polarised Jordan identity is:
\begin{equation*}
	((xz)y)w+((zw)y)x+((wx)y)z=(xz)(yw)+(zw)(yx)+(wx)(yz)
\end{equation*}
%I haven't seen this written as a string diagram.

A \Dfn{cubic Jordan algebra} is a Jordan algebra, $J$, together with a linear form $T$, a quadratic form $S$
and a cubic form $N$ such that a generic element $A\in J$ satisfies
\begin{equation*}
	A^3 - T(A) A^2 + S(A)A - N(A)I
\end{equation*}
Define the adjoint by 
\begin{equation*}
	A^\# = A^2 - T(A) A + S(A)I
\end{equation*}
Then $AA^\# = N(A)A = A^\#A$.
Let $N$ be a symmetric cubic form on a finite dimensional vector space $V$. Then the first linearisation is $N(x;y)$ which is the coefficient
of $t$ in the expansion of $N(x+ty)$, so
\begin{equation*}
	N(x+ty) = N(x) + tN(x;y) + t^2N(y;x) + t^3N(y)
\end{equation*}
Note that $N(x;y)$ is quadratic in $x$ and linear in $y$. The second linearisation is 
\begin{equation*}
	N(x,y,z) = N(x+z;y) - N(x;y) - N(z;y)
\end{equation*}
which comes from the expansion
\begin{equation*}
	N(x+tz;y) = N(x;y) + tN(x,y,z) + t^2N(z;y)
\end{equation*}
Note that $N(x,y,z)$ is linear in all three variables.

The cubic form can be recovered from the polarisation (if 6 is invertible) since
\begin{equation*}
	N(x) = 6N(x,x,x)
\end{equation*}

A \Dfn{basepoint} is an element $c\in V$ with $N(c)=1$. Define linear forms
\begin{equation*}
	T(x) \coloneqq N(c;x) \quad,\quad
T(x,y) \coloneqq T(x)T(y) - N(c,x,y)
\end{equation*}
Define a quadratic form and its polarisation
\begin{equation*}
	S(x) \coloneqq N(x;c) \quad,\quad
	S(x,y) \coloneqq N(x,y,c)
\end{equation*}
These satisfy
\begin{equation*}
	T(c)=3 \quad,\quad S(c)=3 \quad,\quad T(c,y) = T(y)
\end{equation*}

Assume the bilinear trace, $T$, is a nondegenerate bilinear form. Then we define the adjoint by
\begin{equation*}
	T(x^\#,y) = N(x;y)
\end{equation*}
This satisfies $S(x) = T(x^\#)$.

The identity $N(x^\#) = N(x)^2$ appears in \cite[Problem~4.5]{mccrimmon2004}.

A \Dfn{Jordan cubic norm} is a cubic norm that satisfies
\begin{equation*}
	x^{\#\,\#} = N(x)x
\end{equation*}

We can construct a cubic Jordan algebra from a Jordan cubic norm.
The map $x\mapsto x^\#$ is quadratic. The linearisation is a bilinear product
\begin{equation*}
	x\# y \coloneqq (x+y)^\# - x^\# - y^\#
\end{equation*}
This satisfies $S(x,y) = T(x\# y)$.

The Jordan product is determined by the sharp product
\begin{equation*}
	x \circ y \coloneqq \frac12 \left(x\# y + T(x)y + T(y)x - S(x,y)c \right)
\end{equation*}

The polarisation of the fundamental identity, using the symmetry of the product and the inner product, is
\begin{equation}
	(X\circ Y, Z\circ W) + (X\circ Z, Y\circ W) + (X\circ W,Y\circ Z)
	= (a,d)(b,c) + (a,b)(c,d) + (a,c)(b,d)
\end{equation}
for $X,Y,Z,W\in H'_3(\bA)$.

The conclusion is Springer's result that a cubic Jordan algebra has a Jordan cubic norm
(by construction) and that a Jordan cubic norm together with a basepoint is a cubic Jordan algebra.
%The small print is that we are working with finite dimensional vector spaces and 6 is invertible.

\subsection{Lie algebras}
Let $N$ be a Jordan cubic norm on $V$. Define the \Dfn{structure group} to be the subgroup of $\Aut(V)$
which preserves $N$. Note that the structure group is an algebraic group. 

Let $J$ be a cubic Jordan algebra. Then we have the structure group and the derivation algebra,
$\der(J)$. Define the structure algebra, $\str(J)$, to be the Lie algebra of the structure group
of the Jordan cubic norm. By construction, $\der(J)$ is a subalgebra of $\str(J)$.

The action of $\der(J)$ on $J$ preserves the unit, $c$. Define $J'\subset J$ to be the subspace orthogonal
to $c$, equivalently, the kernel of $T$. Then $J$, as a module for $\der(J)$, decomposes as $J\cong J'\oplus I$.
Also, $\str(J)$,  as a module for $\der(J)$, decomposes as $\str(J) \cong \der(J)\oplus J'$.

Restricting to the kernel of $T$,
\begin{equation*}
	x \circ y \coloneqq \frac12 \left(x\# y - S(x,y)c \right)
\end{equation*}
and then projecting to the orthogonal complement to $c$ gives $\frac12 x\# y$.
This shows that the sharp product is a bilinear product on $J'$ and that $\der(J)$ is the Lie algebra
of derivations.

Then the quadratic norm on $J'$ satisfies
$S(x\# x) = S(x)^2$.

%-------------------------------------------------
\section{Derivation algebras}
%-------------------------------------------------
\subsection{String diagrams}
The string diagrams are discussed in \cite[Chapter~19]{Cvitanovic2008} (our convention is $\alpha=(\nn+2)$),
 \cite{gandhi2022} (our convention is $\delta=\nn$ and $\alpha=(\nn+2)$) and \cite[\S6.2.3,\S6.7]{morrison2024}.
 
 %For connected simple trivalent graphs of degree five, see \href{https://oeis.org/A014372}{Degree 3, Girth 5},
 %and \href{https://houseofgraphs.org/meta-directory/cubic}{Cubic graphs}
 %and \href{https://www.mathe2.uni-bayreuth.de/markus/reggraphs.html}{Regular Graphs}.

%The free category is the cobordism category of trivalent graphs.
In \S\ref{sec:cubic} we described the algebraic theory of a cubic Jordan algebra. In this section we
consider the algebraic theory for the trace-free subspace of a cubic Jordan algebra and describe the PROP
for this theory in terms of string diagrams. The data for this theory is a vector space with a non-degenerate
symmetric inner product and a fully symmetric cubic form $N$. This data determines a bilinear multiplication
$(x,y)\mapsto x\# y$ which satisfies 
\[ (x,y\# z) = N(x,y,z) = (x\#y,z) \]
The cobordism category of trivalent graphs is a strict rigid symmetric monoidal category
and has monoid of objects $\bN$ (with addition) and so is a PROP. The cubic norm on the trace-free subspace
of a cubic Jordan algebra satisfies the \Dfn{fundamental relation}:
\[ (x\# x,x\# x) = (x,x)^2 \]
In order to write this in terms of string diagrams we first take the polarisation.
The polarisation of the fundamental relation is:
\begin{equation*}
\Kcirc + \Hcirc + \Acirc = 2\Icirc +2 \Ucirc +2 \Xcirc
\end{equation*}
The polarisation introduces coefficients.
In order to polarise a cubic norm we require 6 to be invertible.
Note that we can rescale  a trivalent vertex. The fundamental relation fixes a scale.

In addition to the fundamental relation we also impose the reduction rules:
\begin{figure}
	\begin{align*}
\circle &\coloneqq \nn \empty & 	\tadpole &\coloneqq 0 \\
	\bigon &\coloneqq (\nn+2)\linecirc & 	\trianglecirc &\coloneqq \frac{1}{2} (2-\nn)\trivalent
\end{align*}
\caption{Reduction rules}\label{fig:f4reductions}
\end{figure}
The first reduction rule defines the loop value. The loop value is an element $\nn$ in the coefficient ring.
Let $\bZ[1/6,\nn]$ be the ring obtained from $\bZ$ by inverting 6 and adjoining the formal variable $\nn$.
Then the \Dfn{coefficient ring}, $\kk$, is a commutative $\bZ[1/6,\nn]$-algebra.

The second reduction rule is independent of the
fundamental relation and excludes an infinite sequence of cubic Jordan algebras, see \cite[Example~6.60]{morrison2024} for details. The third and
fourth reduction rules are consequences of the fundamental relation and the first two reduction rules;
see \cite[(19.2),(19.17)]{Cvitanovic2008} and \cite{gandhi2022} and Example~\ref{ex:f4source} below.

The last reduction rule is
\begin{multline}\label{eq:f4square}
	\squarecirc \coloneqq \frac{(6-\nn)}2\Acirc\\ + \frac{(3\nn+2)}2\Icirc + \frac{(3\nn+2)}2\Ucirc -\frac{(\nn+6)}2 \Xcirc \\
	= \frac{(\nn-6)}2\Kcirc + \frac{(\nn-6)}2\Hcirc \qquad\qquad\qquad\qquad \\ + \frac{(\nn+14)}2\Icirc + \frac{(\nn+14)}2\Ucirc + \frac{(6-3\nn)}2\Xcirc
\end{multline}
Note that the last reduction rule is invariant under rotation.

The PROP defined by these relations has a second interpretation as the category of invariant tensors
of a trace-free cubic Jordan algebra regarded as a representation of the automorphism group.

%The dimension of the adjoint representation is
%\[ \frac{3\nn(\nn-2)}{(\nn+10)} \]

\subsection{Confluence}
We distinguish the reduction rules from the fundamental relation. Each reduction rule is oriented and
substitutes one diagram by a linear combination. This defines a rewriting system on linear combinations of diagrams. A reduction step consists of making the substitution in one term. 

\begin{prop}
	The rewriting system arising from the reduction rules in Figure~\ref{fig:f4reductions} and \eqref{eq:f4square} is confluent.
\end{prop}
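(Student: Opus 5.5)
The approach is the diamond lemma for linear rewriting systems (Bergman's version, or Newman's lemma together with a critical pair analysis). First I would establish termination. Each rule strictly decreases a well-founded measure on planar trivalent diagrams with fixed boundary. The loop and tadpole rules remove a component or vertices. The bigon and triangle rules reduce the number of internal vertices by two. The square rule, in its second form (\Kcirc, \Hcirc, \Icirc, \Ucirc, \Xcirc), replaces a square with four internal vertices by diagrams with at most two internal vertices. So the number of trivalent vertices, with the number of closed loops as a tie-breaker, strictly decreases. This holds even though \Xcirc is a non-planar crossing: the PROP is symmetric, so crossings are allowed and carry no vertices. Hence every reduction sequence terminates, and by Newman's lemma it suffices to check local confluence.

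Next I would reduce local confluence to finitely many critical pairs. Two redexes that share no vertex or edge commute trivially, since each rewrite is local and the result is linear. The essential overlaps are where two faces of length at most $4$ (loop, tadpole, bigon, triangle, square) share at least one edge, or where a rule's left side is contained in another's. I would enumerate these as small diagrams with few boundary points. Each is obtained by gluing two faces along a common edge path and closing with external legs, and these are exactly the overlap pictures the paper has drawn: \overlaptwotwo, \overlaptwothree, \overlaptwofour, \overlapthreethree, \overlapthreefour, \overlapfourfour, together with the degenerate variants (\overlaptwotwoA, \overlaptwothreeA, \overlapthreethreeA, \overlaptwotwoB) where faces share more than one edge or a face meets itself. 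The possible patterns are limited: two faces share one edge or a path of edges, and the union has at most $4+4-2=6$ boundary-adjacent vertices. So the list is finite, and by rotation invariance of \eqref{eq:f4square} it can be taken up to symmetry.

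For each critical diagram I would reduce it in two ways, applying the two overlapping rules first, and then reduce each result fully. I would check that the two normal forms agree as elements of $\kk$-linear combinations of irreducible diagrams. Normal forms here are diagrams with no faces of length at most $4$. For at most $4$ boundary points these are just the crossingless and crossing matchings and the trees \Hcirc, \Kcirc, together with \trivalent, \linecirc and \empty. For $5$ boundary points (\overlapthreefour) they are the pentagon \pentagon, the trees \tree{0} and the matching-plus-vertex diagrams \twothree{0}. For $6$ boundary points (\overlapfourfour) the normal-form basis needs hexagon-free and hexagon-containing diagrams, possibly with crossings.

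The main obstacle is the last two overlaps, square--triangle and especially square--square with six external legs. There the crossing terms \Xcirc produced by \eqref{eq:f4square} generate non-planar diagrams. Those diagrams themselves contain new small cycles, for example a crossing inside a square creates bigons or triangles when unfolded. Each therefore needs further reduction before comparison, and the expressions are long polynomials in $\nn$. I would first try the square--triangle case by hand, using the second form of \eqref{eq:f4square}, which has no \Acirc terms. For \overlapfourfour I would rely on a computer enumeration of the reductions, with the comparison of coefficients in $\bZ[1/6,\nn]$ certified independently, as the paper indicates, via the website and the Lean code. Agreement in every case gives local confluence, and hence confluence.
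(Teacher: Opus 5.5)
Your route is the paper's: Newman's lemma, termination because every rule lowers the number of trivalent vertices, and local confluence by direct computation on the overlaps of Figure~\ref{fig:overlapsf4}. Your loop-count tie-breaker is a small genuine improvement. The loop rule removes no vertex, and the paper's one-line termination argument overlooks this.

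Your description of the critical-pair check, however, contains errors that misplace the difficulty.

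First, the leg counts are wrong. Because the PROP is symmetric, redexes are short cycles of an abstract graph, not faces. Two cycles through a trivalent vertex must share an edge. Two cycles of lengths $p,q$ sharing exactly one edge leave $p+q-4$ external legs, since both ends of the shared edge are saturated. So \overlapthreefour{} has $3$ legs and \overlapfourfour{} has $4$, not $5$ and $6$, and no overlap has more than $4$. Every rule except the loop rule removes at least two vertices, so all reducts of an overlap have at most $4$ vertices. That is too few for a cycle of length at least $5$, so their normal forms are forests. For at most $3$ legs each check is therefore an identity of polynomials in $\nn$ multiplying the empty diagram, the strand or the trivalent vertex; for one leg both sides vanish. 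For instance, both reductions of \overlaptwofour{} give $(\nn+2)^2$, with either form of \eqref{eq:f4square}. Pentagons and hexagons never occur.

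Second, your four-leg basis omits the crossed tree \Acirc. It is a tree, hence irreducible, and using the second form of \eqref{eq:f4square} does not avoid it. In \overlapfourfour{}, the \Xcirc{} term for one square, joined to the two remaining vertices, is exactly \Acirc. The relevant space is spanned by \Icirc, \Ucirc, \Xcirc, \Kcirc, \Hcirc, \Acirc. You must also fix one right-hand side for \eqref{eq:f4square} throughout, because the two forms differ by a multiple of the fundamental relation, which is not a rewriting rule.

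Third, \overlapfourfour{} is the easiest case, not the main obstacle. The reflection exchanging its two squares acts on the four legs as a double transposition, which fixes each of those six diagrams. So the two reductions agree by symmetry, and no computer enumeration is needed.

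Your list, like the paper's figure, also omits two squares sharing two disjoint edges. Their union is the closed graph $K_4$, and an automorphism of $K_4$ exchanging the two squares makes this pair trivially joinable.
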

\begin{proof}
	This is an application of Newman's Lemma (aka Bergman's Diamond Lemma). It is sufficient to show that
	the rewriting system is terminating and locally confluent.
	
	Each term in the support of the linear combination has fewer vertices than the original term. Hence, by well-founded induction, the rewriting system is terminating.
	
	The proof that the rewriting system is locally confluent is a direct calculation.  The overlaps
	are shown in Figure~\ref{fig:overlapsf4}.
\end{proof}	
\begin{figure}[h]
	\[ \overlaptwotwo \quad \overlaptwothree \quad \overlaptwofour %
	\quad \overlapthreethree \quad \overlapthreefour \]
	\[ \overlapfourfour \quad \overlapthreethreeA \quad \overlaptwotwoB \]
	\caption{Overlaps}\label{fig:overlapsf4}
\end{figure}

\begin{cor}\label{cor:f4} Every linear combination of string diagrams has a normal form which is a linear combination of string diagrams of girth at least five.
\end{cor}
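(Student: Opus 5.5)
Because the rewriting system of Figure~\ref{fig:f4reductions} and \eqref{eq:f4square} terminates, every linear combination of string diagrams can be rewritten, by applying reduction steps until none applies, to a linear combination of irreducible diagrams. By the preceding Proposition the system is confluent, so this irreducible result does not depend on the order of the reductions; it is the normal form. What remains is to identify the irreducible diagrams, which means showing that a string diagram with no redex has girth at least five.

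I will show the contrapositive: any diagram containing a cycle of length at most four has a redex. A cycle is a closed walk without repeated vertices, taken in the underlying trivalent graph together with any closed loop components, so I split into cases by length. A closed loop component with no vertices is a cycle and is removed by the circle rule. A cycle of length one is a vertex with a self-loop; since the vertex is trivalent, its third edge leaves it, so the configuration is a tadpole. A cycle of length two consists of two vertices joined by a double edge, each with one further edge, which is the bigon. A cycle of length three or four is a triangle or a square, each with its external legs. These are exactly the left-hand sides of the rules.

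One subtlety needs care. When the edges of a cycle of length $k$ are contracted, its external legs might coincide with one another or with edges of the cycle. A reduction rule is local: it replaces a neighbourhood of the cycle, so it still applies even when external legs are joined elsewhere in the diagram. The only genuinely different case is a short cycle whose vertices are joined by chords, for example a square with a diagonal. A chord, however, creates a shorter cycle, so I choose a cycle of minimal length, and a minimal cycle has no chords. If two legs at the same vertex are equal, that vertex has a multiple edge, which is again a shorter cycle. So a minimal cycle of length $k \le 4$ with its $k$ distinct legs matches the left-hand side of the corresponding rule, up to rotation.

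Finally, since the string diagrams are treated as abstract graphs in a symmetric PROP, planarity plays no role, and the square rule is invariant under rotation, as noted after \eqref{eq:f4square}. Hence a diagram with no redex has no cycle of length at most four, that is, it has girth at least five (a forest has infinite girth). The main point needing attention is the minimality argument that lets us apply the local rules to a minimal short cycle.
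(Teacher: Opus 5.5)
Your proposal is correct and follows the paper's proof: termination gives an irreducible form, the confluence Proposition makes it unique, and irreducible diagrams have girth at least five. The paper simply calls this last fact an observation, and you justify it by checking that every cycle of length at most four exhibits the left-hand side of a rule. One minor inaccuracy is harmless: your claim that a minimal cycle has no chords fails for the bigon in the theta graph (the first overlap in Figure~\ref{fig:overlapsf4}), but there, as with any chord, the rule still applies by the locality you invoke, since in a rigid symmetric PROP the legs of a redex may be joined to one another, so the minimality argument is not actually needed.
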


\begin{proof} The normal form is given by applying reduction rules. Since the rewriting system is confluent
	the result is independent of the choice of the sequence of reduction steps. The result follows from the
	observation that the string diagrams which have no reduction are the string diagrams of girth at least five.
\end{proof}

\subsection{Closed graphs}

Let $\cT$ be the cobordism category of trivalent graphs. For any commutative $\bZ[1/6,\nn]$-algebra, $\kk$,
let $\kk \cT$ be the free $\kk$-linear category on $\cT$. In this section we prove:

\begin{thm}\label{thm:f4} The $\kk$-linear tensor ideal of $\kk \cT$ generated by the fundamental relation and the reduction rules
contains $p^{(F)}(\nn)[]$ where $[]$ is the empty diagram and $p^{(F)}(\nn)$ is the polynomial
\[ p^{(F)}(\nn)\coloneqq (\nn - 26) (\nn - 14) (\nn - 8) (\nn - 5)(\nn - 2)^2 \nn (\nn + 1) (\nn + 2) \]
\end{thm}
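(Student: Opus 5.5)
The strategy is to exhibit closed trivalent graphs whose evaluation is ambiguous. The rewriting system of Figure~\ref{fig:f4reductions} and \eqref{eq:f4square} is confluent, so every closed diagram has a well-defined normal form. By Corollary~\ref{cor:f4}, that normal form is a linear combination of closed diagrams of girth at least five. The fundamental relation, however, was not used as a reduction rule. Each time it is applied inside a closed diagram and the result is then reduced, we get an element of the ideal. That element has the form $q(\nn)[]$ plus a combination of girth $\ge 5$ closed graphs.

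The plan is to generate enough such elements and eliminate the girth $\ge 5$ closed graphs among them. What remains is a nonzero multiple of $[]$, and its coefficient should be $p^{(F)}(\nn)$ up to a unit of $\bZ[1/6]$.

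\textbf{Step 1: small closed graphs.} First I would choose closed graphs $G$ that contain an edge with a neighbourhood of the shape $\Kcirc$. Applying the fundamental relation there replaces $\Kcirc$ by $2\Icirc+2\Ucirc+2\Xcirc-\Hcirc-\Acirc$. Reducing both $G$ and the modified diagram to normal form then gives a relation in the ideal.

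The natural first candidates are the smallest graphs of girth at least five, especially the Petersen graph (10 vertices), together with graphs of girth four or less that are one move away from it. The reason to start here is that the fundamental relation, applied at an edge of a girth-5 graph, produces diagrams with shorter cycles. These shorter cycles reduce via the square and pentagon-free rules, but non-planar crossings $\Xcirc$ can also create new girth-5 graphs, such as the Petersen graph again.

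The outcome of each computation is an equation of the form $a(\nn)[\text{Petersen}] + b(\nn)[\,] \in$ ideal. Further girth-5 cubic graphs, such as the Heawood graph (girth 6, 14 vertices) and the 12-vertex girth-5 graphs, may also appear.

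\textbf{Step 2: elimination.} Next I would collect enough relations, at different edges and in different graphs, to form a square linear system over $\kk$ in the unknowns: the finitely many girth $\ge 5$ closed graphs up to the relevant size, plus $[]$. Taking a suitable combination, namely a determinant or resultant of the coefficient matrix, eliminates the nontrivial graphs. The result is $r(\nn)[]$ in the ideal.

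I would then factor $r(\nn)$ over $\bZ[1/6]$ and check that it equals $p^{(F)}(\nn)$ up to a unit. The roots are a good consistency check: $\nn=26,14,8,5$ correspond to $\bO,\bH,\bC,\bR$ with $\dim H'_3=3m+2$, and the remaining roots correspond to degenerate cases. These points must survive, so no smaller polynomial can occur. If a product of several determinants gives extra factors, I would take gcd-type combinations to cut them down.

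\textbf{Main obstacle.} The hard part is the bookkeeping in Steps 1--2. Applying the fundamental relation to a 10- to 14-vertex graph produces many diagrams. Reducing them with \eqref{eq:f4square} blows up combinatorially, and I also need to recognize isomorphic girth-5 graphs in the output.

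I would handle this by computer. Diagrams would be stored as labelled cubic multigraphs, reduced greedily using the confluence result, and canonically labelled (e.g.\ via nauty-style canonical forms). Each reduction step would be exported as a certificate, so that the final linear combination exhibiting $p^{(F)}(\nn)[]$ as a $\kk$-combination of relation instances can be checked independently, for example in Lean.

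The riskiest choice is whether the Petersen graph alone suffices. I expect not, since several girth-5 graphs may appear. My fallback is to enlarge the set of test graphs until the system has full rank.
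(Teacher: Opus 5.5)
Your proposal is essentially the paper's argument: applying the polarised fundamental relation at an edge of a closed graph and reducing is exactly the paper's substitution of the six-term relation into a \emph{source} (the graph with that edge contracted to a four-valent vertex), and your determinant elimination repackages the paper's computer-assisted inductive evaluation of girth-$\ge 5$ graphs, done vertex count by vertex count. The one thing to recalibrate is where the answer appears: all relations involving graphs with $10$, $12$ and $14$ vertices turn out to be consistent, so any determinant built only from them vanishes, and $p^{(F)}(\nn)$ first emerges at $16$ vertices, where, after substituting the evaluations of the $49$ sixteen-vertex girth-$\ge 5$ graphs, $145$ of the $335$ sources with $14$ trivalent vertices each give an associate of $p^{(F)}(\nn)$ in $\bZ[1/6,\nn]$ directly, so the paper needs no gcd step.
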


\begin{cor}
	Let $K$ be a field whose characteristic is not 2 or 3.
	The dimension of a trace-free cubic Jordan algebra over $K$ is an element of
	$\{26,14,8,5,2,0,-2\}$.
\end{cor}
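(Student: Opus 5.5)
We deduce the corollary from Theorem~\ref{thm:f4} by constructing a functor from the diagram category to vector spaces and reading off the closed-diagram relation as a scalar identity. Let $J$ be a cubic Jordan algebra over $K$ and $V=J'$ its trace-free subspace, with $d=\dim_K V$. Since the characteristic is not $2$ or $3$, $6$ is invertible in $K$. Make $K$ a $\bZ[1/6,\nn]$-algebra by sending $\nn\mapsto d$ (more precisely, the image of $d$ in $K$).

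Next we define a symmetric monoidal functor $F\colon K\cT\to \mathrm{Vect}_K$. Send the object $1$ to $V$. Send the cup and cap to the nondegenerate form $T(\ ,\ )$ restricted to $V$ and its inverse. Send the trivalent vertex to the fully symmetric trilinear form $(x,y\#z)$, rescaled by the constant fixed by the fundamental relation. Symmetry of the form and of the vertex make $F$ well defined on the cobordism category of trivalent graphs.

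We then check that $F$ kills each generator of the tensor ideal.
\begin{itemize}
\item The circle evaluates to $\dim V=d$, which is $\nn$ under our specialisation.
\item The polarised fundamental relation holds on $V$, since $S(x\#x)=S(x)^2$ was established in \S\ref{sec:cubic}; polarising is legitimate because $6$ is invertible.
\item The tadpole is sent to the element $v$ with $(v,y)=\sum_i(e_i,e_i^\vee\# y)$. This is the trace of $y\#-$ on $V$, which is a $\der(J)$-invariant linear form on $V$.
\end{itemize}

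The tadpole vanishing is the delicate point. For the remaining cases, the bigon and triangle reductions and the square rule~\eqref{eq:f4square} hold automatically. The paper states that they are consequences of the fundamental relation together with the circle and tadpole rules, so they lie in the ideal generated by those relations and are killed by any functor killing the generators.

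The main obstacle is therefore the tadpole. Over an algebraically closed field I would argue that $V$ is an irreducible, or at least trivial-isotypic-free, $\der(J)$-module, so the invariant form is zero. A cleaner route is to compute directly. Expand $x\#y$ in terms of the Jordan product as in \S\ref{sec:forms}; then $\operatorname{tr}(y\#-)$ on $J$ is a linear form built from $T(y)$. Restricting to $T(y)=0$ and subtracting the contribution of the $c$-direction gives zero, using $T(c,y)=T(y)$. I expect to need to extend scalars to $\bar K$ and use $J\otimes\bar K$, which is harmless since the dimension is unchanged.

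Granting these checks, $F$ factors through the quotient by the tensor ideal. By Theorem~\ref{thm:f4}, $F(p^{(F)}(\nn)[\,])=p^{(F)}(d)\cdot 1_K=0$. Hence $d$ is a root of $p^{(F)}$ in $K$, so $d\in\{26,14,8,5,2,0,-1,-2\}$, read in $K$.

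The remaining step is to exclude $-1$ as a genuine dimension, or to interpret the list as the paper does. I would check whether $\nn=-1$ is already excluded by the triangle coefficient or by nondegeneracy; otherwise I would note that it arises only in positive characteristic and that the stated set is the intended reading.
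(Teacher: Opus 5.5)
\textbf{There is a genuine gap at the tadpole step.} Your skeleton is exactly the paper's: a symmetric monoidal functor out of $K\cT$, specialise $\nn$ to the dimension, apply Theorem~\ref{thm:f4}. The difference is that the paper takes as given that a trace-free cubic Jordan algebra yields a functor killing the whole ideal, tadpole included. You instead try to \emph{derive} the tadpole relation for the trace-free part of an arbitrary cubic Jordan algebra. That step fails, and no argument can fix it.

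Take $J=K\times J(Q)$, where $J(Q)=K1_Q\oplus W$ is the Jordan algebra of a nondegenerate quadratic form, $\dim J(Q)=m$, and the cubic norm is $N(a,x)=aQ(x)$.
\begin{itemize}
\item The fundamental relation holds, as it does for every cubic Jordan algebra by Cayley--Hamilton.
\item Capping two legs of the vertex gives, up to a unit, the linear form $y\mapsto \tr_J(L_y)$ on $J'$. This is because $L_yc=y$ has no $c$-component when $T(y)=0$.
\item For $y=(-2\alpha,\alpha 1_Q+w)\in J'$ this form equals $-2\alpha+m\alpha=(m-2)\alpha$, which is nonzero for $m\neq 2$.
\end{itemize}
Correspondingly, $J'$ contains the nonzero $\der(J)$-invariant vector $(-2,1_Q)$, so your irreducibility argument breaks. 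Also $\tr L_y$ is not a function of $T(y)$, so your ``cleaner route'' breaks too.

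Since $\dim J'=m$ is arbitrary, the corollary is simply false under your reading. This is the infinite family that, as the paper remarks, the tadpole rule is imposed to exclude. The tadpole relation has to be part of the hypothesis: a trace-free cubic Jordan algebra must mean a model of the whole presentation. Once it is, your other checks go through. The bigon and triangle rules follow as in Example~\ref{ex:f4source}. The square rule follows similarly: suitable sources express the sum of each pair of the three squares, and $2$ is invertible. Note that the paper only asserts the bigon and triangle cases, not the square.

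Two smaller points.
\begin{itemize}
\item The paper works in super vector spaces and sends $\nn$ to the superdimension. This is how the negative values in the list arise (the $(0|2)$ and $\OSp(1|2)$ examples), so your argument should be run there; it goes through verbatim.
\item Do not try to exclude $-1$. The paper's argument, like yours, only shows that $\nn$ is a root of $p^{(F)}$, and the paper itself describes a model at $\nn=-1$. Its absence from the displayed set looks like a slip in the statement. None of your proposed exclusions would single it out anyway. The inner product on the rank-five module is degenerate at $2,0,-1,-2$ alike. For ordinary vector spaces in characteristic $0$, both $-1$ and $-2$ are excluded simply because $d\geq 0$.
\end{itemize}
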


\begin{proof} A trace-free cubic Jordan algebra over $K$ gives a $K$-linear symmetric monoidal functor from $K\cT$ to the category of finite dimensional super vector spaces over $K$. Put $\kk=K[\nn]$.
The homomorphism $K[\nn]\to K$ maps $\nn$ to the super dimension of the trace-free cubic Jordan algebra
and also must map $p^{(F)}(\nn)$ to 0.
\end{proof}

\begin{rem}
Consider $K[\nn]\cT$ as a family of trace-free cubic Jordan algebras over $K$ parametrised by the affine line
$\Spec(K[\nn])$. Then Theorem~\ref{thm:f4} says that this is a family parametrised by
the quotient ring $\Spec(K[\nn]/\left\langle p^{(F)}(\nn) \right\rangle)$.
\end{rem}

We evaluate trivalent graphs. Let $G$ be a trivalent graph. An \Dfn{evaluation} of $G$ is a relation of the form $G=p_G(\nn)\,[]$ where $p_G(\nn)\in \kk$ and $[]$ is the empty graph.
We refer to $p_G(\nn)$ as the evaluation. For example, the evaluation of the loop is $\nn$.
The evaluation of a trivalent graph is the product of the evaluations of the connected components, so we assume $G$ is connected.

Applying Corollary~\ref{cor:f4}, the normal form of a trivalent graph is a
linear combination of trivalent graphs of girth at least five. The method for finding evaluations
of connected trivalent graphs is inductive on the number of vertices. There are no connected trivalent
graphs with less than 10 vertices. This implies that the normal form of a trivalent graph with less
than 10 vertices is its evaluation.

A \Dfn{source} is a connected graph with one vertex of valence four and all other vertices of valence three.
Given a source we can substitute the six-term relation for the four valent vertex to get a relation which is a linear combination of six trivalent graphs. Note that this is well-defined as the six-term relation is invariant under permutation of the boundary points, by construction.

\begin{ex}\label{ex:f4source} The third and fourth reduction rules are obtained from the
	sources
\begin{equation*}
\curl \qquad \twist
\end{equation*} 
\end{ex}

If the source has $t$ trivalent vertices then three of the terms have $t$ vertices and three have $t+2$ vertices. The normal form of this relation is a linear combination
of connected trivalent graphs of girth at least five. Furthermore, each term of the relation has at most
$t+2$ vertices.

This gives an inductive procedure for finding evaluations. Assume we have found evaluations of all
trivalent graphs of girth at least five with fewer than $t$ vertices. Given a source with $t-2$
trivalent vertices we construct the relation and its normal form. The normal form is a linear combination
of trivalent graphs where each term has girth at least five and at most $t$ vertices. If any term has a
connected component with fewer than $t$ vertices then, by the inductive assumption, we have found the
evaluation and we substitute the evaluation for the component. The result of these substitutions is
a linear combination of connected trivalent graphs of girth at least five and precisely $t$ vertices.

\begin{ex}
The Petersen graph is the unique (up to isomorphism) connected trivalent graph with 10 vertices.
There is also a unique (up to isomorphism) source with 8 trivalent vertices. Substituting the 
six term relation gives a linear combination of six connected trivalent graphs. One of the terms is
the Petersen graph and the other five terms have girth at most four. Therefore the normal form of this
relation is the evaluation of the Petersen graph.
\end{ex}

The number of trivalent graphs of girth at least five and the number of sources is shown in Table~\ref{table:f4}. For more terms in the first sequence see \oeis{A014372}.

\begin{table}[h]
\begin{equation*}
\begin{array}{r|rrrr}
& 10 & 12 & 14 & 16 \\ \hline
graphs & 1 & 2 & 9 & 49 \\
sources & 1 & 4 & 31 & 335
\end{array}
\end{equation*}
\caption{Trivalent graphs and sources}\label{table:f4}
\end{table}

We put $\kk=\bQ[\nn]$ and, for $t=10,12,14,1$ we generated all sources
with $t-2$ trivalent vertices (up to isomorphism) and computed the linear combination of
connected trivalent graphs of girth at least five and $t$ vertices. We observed that for each 
connected trivalent graph there was a source that gave an evaluation. For $t<16$ substituting these
evaluations in the remaining relations gave 0. However for $t=16$ 145 of the 335 sources gave a non-zero polynomial. For all sources which gave a non-zero polynomial the polynomial was of the form
\[ \pm \frac{3^r}{2^s} (\nn - 26) (\nn - 14) (\nn - 8) (\nn - 5)(\nn - 2)^2 \nn (\nn + 1) (\nn + 2) \]
where $r,s$ are positive integers. These are all associates of $p^{(F)}(\nn)$ as elements of $\bZ[1/6,\nn]$.

%\cite{comes2017}

Following \cite{Thurston2004}, we construct a trace-free Jordan algebra for each root of $p^{(F)}(\nn)$.
This shows that none of the roots of $p^{(F)}(\nn)$ will be eliminated by continuing the calculation
to $t>16$. 
\begin{itemize}
	\item The trace-free cubic Jordan algebras
	for $\nn = 26,14,8,5$ are the algebras $H_3'(\bA)$ for $\bA$ a composition algebra constructed in \S\ref{sec:cubic}.
	\item The trace-free cubic Jordan algebra for $\nn=2$ is the two dimensional irreducible representation
	of the symmetric group $\fS_3$.
	\item The case $\nn=0$ is the degenerate case where all diagrams are zero.
	\item The trace-free cubic Jordan algebra for $\nn=-1$ is the super space of dimension $(k|k+1)$ and the automorphism
	group is the orthosymplectic super group $\OSp(1|2)$. The PROP reduces to a specialisation of the
	Brauer category.
	\item The trace-free cubic Jordan algebra for $\nn=-2$ is the super space of dimension $(0|2)$ and the automorphism group is $\SU(2)$. The PROP reduces to the Temperley-Lieb category.
\end{itemize}

The six diagrams in the fundamental relation generically span a free $\kk$-module of rank 5.
This $\kk$-module has a symmetric inner product, defined diagrammatically. This inner product is degenerate
for $\nn= 2,0,-1,-2$ and is non-degenerate otherwise. For $\nn= 2,0,-1,-2$ there are further relations
give by the null space of the inner product.

The trace-free Jordan algebra for $\nn=2$ is constructed in \cite{Deligne2002}.
Let $\mu_2$ be the group of order 2. 
The centraliser of $G_2\subset \Spin(8)$ is $\mu_2^2$, the centre of $\Spin(8)$.
Let $\fso(8)$ be the adjoint representation of $\Spin(8)$ and $\fg_2$ the adjoint representation of $G_2$.
The automorphism group of $\fso(8)$ is $\PSO(8) \rtimes \fS_3$.
The centraliser of $G_2\subset \PSO(8) \rtimes \fS_3$ is the symmetric group $\fS_3$.

There is an isomorphism of $G_2\times \fS_3$-modules $\fso(8) \cong \fg_2\otimes 1 \oplus V\otimes W$
where $V$ is the seven dimensional fundamental representation of $G_2$ and $W$ is the two dimensional
irreducible representation. The vector space $W$ is a trace-free cubic Jordan algebra whose automorphism group is $\fS_3$.

%-------------------------------------------------
\section{Structure algebras}
%-------------------------------------------------
\subsection{String diagrams}
The string diagrams are discussed in \cite[\S2.3,Chapter~18]{Cvitanovic2008}
(our convention is $\alpha=2(\nn+3)$).

In \S\ref{sec:cubic} we described the algebraic theory of a cubic norm. In this section we
consider the algebraic theory for the cubic norm of a cubic Jordan algebra and describe the PROP
for this theory in terms of string diagrams. The data for this theory is a vector space, $V$, together
with a  fully symmetric cubic form $N$. This data determines a linear map $V\otimes V\to V^\ast$
(where $V^\ast$ is the dual vector space):
$(x,y)\mapsto x\# y$ which satisfies 
\[ (x,y\# z) = N(x,y,z) = (x\#y,z) \]
The diagram category, $\cB$, is the cobordism category of oriented trivalent graphs such that every trivalent
vertex is either a source or a sink, see Figure~\ref{fig:sink}. The category $\cB$ is a rigid symmetric
category and the monoid of objects is the free monoid on two generators. This makes $\cB$ a coloured PROP
with two colours. The two generating objects are a dual pair. Note that any cycle has even length.

\begin{figure}[h]
	\source \qquad \sink
	\caption{Source and sink}\label{fig:sink}
\end{figure}

The polarisation of the fundamental relation is the seven-term relation
shown in Figure~\ref{fig:seventerm}
\begin{figure}[h]
	\begin{multline*}
		\Wa + \Wb + \Wc \\ = 6 \Vu + 6 \Vv + 6 \Vw + 6 \Vx
	\end{multline*}
	\caption{Seven term relation}\label{fig:seventerm}
\end{figure}

The polarisation introduces coefficients.
In order to polarise a cubic norm we require 6 to be invertible.
Note that we can rescale  a trivalent vertex. The fundamental relation fixes a scale.

In addition to the fundamental relation we also impose the reduction rules:
\begin{figure}
\begin{gather*}
	\circledir \coloneqq \nn \empty
	\qquad
	\bigondir \coloneqq 2(\nn+3)\linedir \\
	\squaredir \coloneqq (3-\nn) \Adir + 6(\nn+3) \Idir + 6(\nn+3) \Udir 
\end{gather*}
	\caption{Reduction rules}\label{fig:e6reductions}
\end{figure}
The first reduction rule defines the loop value. The loop value is an element $\nn$ in the coefficient ring.
Let $\bZ[1/6,\nn]$ be the ring obtained from $\bZ$ by inverting 6 and adjoining the formal variable $\nn$.
Then the \Dfn{coefficient ring}, $\kk$, is a commutative $\bZ[1/6,\nn]$-algebra.

The coloured PROP defined by these relations has a second interpretation as the category of invariant tensors
of a cubic norm regarded as a representation of the structure group.

\subsection{Confluence}
We distinguish the reduction rules from the fundamental relation. Each reduction rule is oriented and
substitutes one diagram by a linear combination. This defines a rewriting system on linear combinations of diagrams. A reduction step consists of making the substitution in one term. 

\begin{prop}
	The rewriting system arising from the reduction rules in Figure~\ref{fig:e6reductions} is confluent.
\end{prop}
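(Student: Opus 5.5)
We follow the same strategy as for the confluence of the rewriting system in Figure~\ref{fig:f4reductions} and \eqref{eq:f4square}, and apply Newman's Lemma (Bergman's Diamond Lemma). So it suffices to show that the system of Figure~\ref{fig:e6reductions} is terminating and locally confluent.

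\emph{Termination.} Each rule replaces a diagram by a linear combination of diagrams with strictly fewer trivalent vertices: the loop rule removes a closed component, the bigon rule removes two vertices, and the square rule removes at least two. Hence every term in the support of a reduct has fewer vertices than the original term, and well-founded induction on the number of vertices gives termination. The irreducible diagrams are the oriented bipartite trivalent graphs with no cycles of length $2$ or $4$. Since every cycle in $\cB$ has even length, these are exactly the graphs of girth at least six.

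\emph{Local confluence.} We enumerate the critical pairs, meaning the minimal diagrams in which two redexes (a directed loop, bigon or square) share at least one edge or vertex, up to orientation-respecting isomorphism. Cycles here are even and alternate source/sink, so the overlaps are the oriented analogues of those in Figure~\ref{fig:overlapsf4}, with the odd ones (triangles, pentagons, tadpoles) removed. They are:
\begin{itemize}
\item two bigons sharing an edge (the theta graph, with its unique bipartite orientation);
\item a bigon sharing an edge with a square (the graph \overlaptwofourdir);
\item two squares sharing one edge (\overlapfourfourdir);
\item two squares sharing two consecutive edges (the directed analogue of \overlapthreethreeA, a $K_{2,3}$-type configuration);
\item the theta graph with two extra boundary edges (\overlaptwotwoBdir), which contains a bigon overlapping squares.
\end{itemize}
A square sharing three edges with another square forces a bigon, so this case is already covered.

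For each overlap we reduce in both possible ways and check that the two normal forms agree. Each such normal form is a linear combination of boundary-connected diagrams with at most four or five boundary points and girth at least six, essentially the diagrams $\Idir$, $\Udir$, $\Adir$ and their rotations, with coefficients in $\bZ[1/6,\nn]$. We must take care that the square rule is applied consistently with the orientation: the square has two source and two sink vertices, and the rule is stated for the square read off from any of its two orientation-compatible rotations. First we check that the rule is invariant under the rotation by two steps that preserves the orientation pattern. This requires $\Adir$ and its rotated version to reduce compatibly, and it constrains the coefficients $(3-\nn)$ and $6(\nn+3)$ relative to the bigon value $2(\nn+3)$.

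\emph{Main obstacle.} The hardest case will be the two squares sharing one edge, the six-boundary-point hexagon-with-chord. Here each reduction produces $\Adir$-type terms that contain a new square or bigon, so several further reduction steps are needed before both sides reach girth-six normal forms on six boundary points. We then have to verify the resulting polynomial identities in $\nn$ term by term. We would first organise this case by the planar matchings and trees on six oriented boundary points, and carry out the comparison by computer. This matches the verification of Figure~\ref{fig:overlapsf4} and the Lean certificates mentioned in the introduction.
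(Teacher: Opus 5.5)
Your proposal follows the paper's proof: Newman's Lemma, termination by counting vertices, and local confluence checked by direct calculation on the same five overlaps shown in Figure~\ref{fig:overlapse6}. There are a few minor slips to fix. \begin{itemize} \item The hexagon with a chord has four boundary points, not six, and it is actually immediate: the automorphism exchanging its two squares fixes each resulting normal-form diagram, so the two reductions agree. \item The right-hand side of the square rule is already invariant under the symmetries of the oriented square, so rotation imposes no constraint on the coefficients. The only overlap that genuinely constrains them is the bigon--square one, which requires $4(\nn+3)^2=2(\nn+3)(3-\nn)+6(\nn+3)(\nn+1)$. \item As in the paper, the loop rule does not lower the vertex count, so the termination measure should also count closed loops. \end{itemize}
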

\begin{proof}
	This is an application of Newman's Lemma (aka Bergman's Diamond Lemma). It is sufficient to show that
	the rewriting system is terminating and locally confluent.
	
	Each term in the support of the linear combination has fewer vertices than the original term. Hence, by well-founded induction, the rewriting system is terminating.
	
	The proof that the rewriting system is locally confluent is a direct calculation.  The overlaps
	are shown in Figure~\ref{fig:overlapse6}.
\end{proof}

\begin{figure}[h]
	\[ \overlaptwotwodir \quad \overlaptwofourdir \quad 
	\overlapfourfourdir \quad \overlapthreethreeAdir \quad \overlaptwotwoBdir \]
	\caption{Overlaps}\label{fig:overlapse6}
\end{figure}

\begin{cor}\label{cor:e6} Every linear combination of string diagrams has a normal form which is a linear combination of string diagrams of girth at least six.
\end{cor}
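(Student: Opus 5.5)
The plan mirrors the proof of Corollary~\ref{cor:f4}. I will apply the rewriting system of Figure~\ref{fig:e6reductions}, which is confluent by the preceding Proposition. By Newman's Lemma, every linear combination of diagrams in $\kk\cB$ then has a unique normal form, obtained by applying reduction steps in any order until no further step is possible. Termination holds because each reduction strictly decreases the number of vertices, as already noted. So the normal form exists and is independent of the chosen sequence of reductions, and it is a $\kk$-linear combination of irreducible diagrams.

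The substantive step is to identify the irreducible diagrams, and I will show that they are exactly the diagrams of girth at least six. The first observation is that in $\cB$ every vertex is a source or a sink. Every edge therefore joins a source to a sink, so the underlying graph is bipartite and every cycle has even length, as remarked earlier. Hence a cycle of length less than six has length $0$, $2$ or $4$. I read length $0$ as a closed loop component, and all three short cycles are removed by the three rules:
\begin{itemize}
\item a closed loop with no vertices matches the loop rule;
\item a $2$-cycle consists of two vertices joined by two parallel edges, one vertex a source and one a sink, so its neighbourhood is exactly the directed bigon and the bigon rule applies;
\item a $4$-cycle alternates source, sink, source, sink, and each vertex has one further half-edge leaving the cycle, so its neighbourhood is the directed square and the square rule applies.
\end{itemize}
Odd cycles cannot occur, so tadpoles (loops at a vertex, which are $1$-cycles) and triangles need no rule. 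Conversely, a diagram of girth at least six contains no loop component, bigon or square, so no reduction applies to it.

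The main point of care is the matching of the square rule. I must check that the orientation pattern on the external legs of a $4$-cycle in $\cB$ is forced, up to rotation, to be the pattern drawn in $\squaredir$. It is forced: the legs leaving the two sources point outward and the legs entering the two sinks point inward, matching the picture. I must also handle degenerate embeddings, where external legs of the square are identified with one another or joined through a short path; these still contain the square subdiagram, so a reduction still applies. With this in place, the normal form consists only of diagrams of girth at least six, which proves Corollary~\ref{cor:e6}.
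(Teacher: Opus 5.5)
Your proposal is correct and matches the paper's argument: confluence (with termination) gives a normal form independent of the reduction sequence, and the irreducible diagrams are exactly those of girth at least six. Your bipartiteness argument (every edge joins a source to a sink, so the only short cycles are closed loops, bigons and squares, and their orientations are forced to match Figure~\ref{fig:e6reductions}) just spells out the observation the paper leaves implicit. One small point, which the paper shares: the loop rule does not decrease the number of vertices, so termination should be measured lexicographically by (number of vertices, number of closed loops) rather than by vertex count alone.
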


\begin{proof} The normal form is given by applying reduction rules. Since the rewriting system is confluent
	the result is independent of the choice of the sequence of reduction steps. The result follows from the
	observation that the string diagrams which have no reduction are the string diagrams of girth at least six.
\end{proof}

\subsection{Closed graphs}

Let $\cB$ be the cobordism category of oriented trivalent graphs. For any commutative $\bZ[1/6,\nn]$-algebra, $\kk$,
let $\kk \cB$ be the free $\kk$-linear category on $\cB$. In this section we prove:

\begin{thm}\label{thm:e6} The $\kk$-linear tensor ideal of $\kk \cB$ generated by the fundamental relation in Figure~\ref{fig:seventerm} and the reduction rules in Figure~\ref{fig:e6reductions}
contains $p^{(E)}(\nn)[]$ where $[]$ is the empty diagram and $p^{(E)}(\nn)$ is the monic polynomial
\[ p^{(E)}(\nn) \coloneqq (\nn - 27) (\nn - 15) (\nn - 9) (\nn - 6)(\nn - 3)^2(\nn - 1)\nn^2 (\nn + 1) (\nn + 3)^2 \]
\end{thm}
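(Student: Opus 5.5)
The proof follows the strategy used for Theorem~\ref{thm:f4}, adapted to the oriented category $\cB$.

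\textbf{Reduction to normal forms.} By Corollary~\ref{cor:e6}, every closed diagram in $\kk\cB$ is congruent, modulo the ideal generated by the reduction rules of Figure~\ref{fig:e6reductions}, to a $\kk$-linear combination of bipartite trivalent graphs of girth at least six. Here every edge goes from a source vertex to a sink vertex. Evaluations are multiplicative over connected components, so it suffices to evaluate connected graphs. The smallest connected trivalent bipartite graph of girth at least six is the Heawood graph, with $14$ vertices. Every closed graph with fewer vertices therefore reduces directly to its evaluation $p_G(\nn)[]$.

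\textbf{Sources for the seven-term relation.} Here a source means a connected oriented graph with one four-valent vertex, having one incoming and three outgoing half-edges (or the reverse), and all other vertices trivalent sources or sinks. Substituting the seven-term relation of Figure~\ref{fig:seventerm} at that vertex gives a relation. Its three left-hand terms have two more trivalent vertices than the source, and its four right-hand terms have the same number. The left-hand side is symmetric in the three outgoing legs and each right-hand term pairs the incoming leg with one outgoing leg, so the relation is well defined. The orientation constraint means there are actually two kinds of four-valent vertex, related by reversing all arrows. We apply the rewriting system to put each such relation into normal form. We then substitute the already known evaluations of all components with fewer vertices. Proceeding inductively on the vertex count $t = 14, 16, 18, \dots$, each source with $t-2$ trivalent vertices yields a $\bQ[\nn]$-linear relation among connected girth-$\ge 6$ bipartite graphs with exactly $t$ vertices.

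\textbf{Extracting the polynomial.} At each $t$ we first use sources whose relations isolate a single graph with a unit coefficient to obtain that graph's evaluation. We then substitute all the evaluations into the remaining relations. Each leftover relation is a multiple of $[]$, and hence an element of the ideal. The expectation, mirroring the $t=16$ phenomenon in the derivation case, is that at some stage, probably $t=18$ or $t=20$ where Heawood-type and larger girth-six cages proliferate, these consistency relations become nonzero. Every nonzero one should be a unit of $\bZ[1/6]$ times $p^{(E)}(\nn)$, which proves the theorem. One such relation suffices, and the computation is certified by the Lean code and displayed on the website.

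\textbf{Main obstacle.} The expected bottleneck is bookkeeping. We must enumerate the oriented sources up to isomorphism and the bipartite girth-$\ge 6$ graphs, which grow quickly, and carry exact rational arithmetic through many rewriting steps. It is also not a priori clear at which $t$ the first obstruction appears. If the inconsistent relations only have a gcd with $p^{(E)}$, we would combine several of them. As a consistency check that no root should drop out, the roots $27,15,9,6$ correspond to $H_3(\bA)$, and the remaining roots should correspond to degenerate or super cases analogous to those listed after Theorem~\ref{thm:f4}.
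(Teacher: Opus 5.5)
Your overall plan matches the paper's. You use girth-$\ge 6$ normal forms from Corollary~\ref{cor:e6}, then evaluate connected bipartite cubic graphs inductively using relations obtained by substituting the seven-term relation into sources. The leftover relations are scalar multiples of $[]$. The problem is that the substitution step, which drives the whole computation, is set up wrongly.

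The relation in Figure~\ref{fig:seventerm} is the polarisation of $x^{\#\#}=N(x)x$, which has four inputs and one output. So it has \emph{five} boundary points: a distinguished leg $b_0$ and four legs $b_1,\dots,b_4$ of the opposite orientation. The three left-hand terms are the three ways of splitting $\{b_1,\dots,b_4\}$ into two pairs. The four right-hand terms are the four choices of which $b_i$ is joined directly to $b_0$, with the other three meeting at a single trivalent vertex. Hence the special node of a source must be five-valent. Substitution is well defined because the relation is invariant under permutations of $b_1,\dots,b_4$. A four-valent vertex with one incoming and three outgoing legs admits no such substitution. Your own description is also internally inconsistent: three outgoing legs cannot give four right-hand terms, each pairing the incoming leg with a different outgoing leg. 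With the correct node, a source with $s$ trivalent vertices ($s$ odd) yields three terms with $s+3$ vertices and four with $s+1$. The case $s=11$ is the paper's Heawood example.

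Second, your guess about where the obstruction appears is off. In the paper's computation every consistency relation vanishes for $t=14,16,18,20$. Nonzero ones first appear at $t=22$, where there are $28$ graphs and $406$ sources. There, $149$ sources give nonzero polynomials, all associates of $p^{(E)}(\nn)$ in $\bZ[1/6,\nn]$. So the computation must be carried through $t=22$. The theorem is precisely that outcome, which your proposal predicts rather than establishes.

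Your requirement that evaluations be read off only from relations in which the isolated graph has a unit coefficient in $\bZ[1/6,\nn]$ is the right one. It is what makes the conclusion hold over every $\bZ[1/6,\nn]$-algebra $\kk$; the paper works over $\bQ[\nn]$ and leaves this implicit.

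Finally, only the seven-term relation as drawn, together with its rotations, is a generator of the ideal. The arrow-reversed node you also plan to use is a genuinely different relation, since no rotation changes the one-versus-four split of orientations. Either prove that the reversed relation lies in the ideal, or use only sources of the drawn orientation. Otherwise you are computing in a possibly larger ideal.
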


\begin{cor}
Let $K$ be a field whose characteristic is not 2 or 3.
The dimension of a cubic norm over $K$ is an element of
$\{27,15,9,6,3,1,-1,-3\}$.
\end{cor}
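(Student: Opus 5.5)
The corollary should follow from Theorem~\ref{thm:e6} in the same way as the corresponding corollary of Theorem~\ref{thm:f4}. I would realise a cubic norm over $K$ as a monoidal functor out of the diagram category, push the polynomial $p^{(E)}(\nn)$ through that functor, and read off its roots.

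First I build the functor. Let $V$ be a vector space, or more generally a super vector space, over $K$ carrying a cubic norm $N$ of the kind considered in this section. Here $N$ is the cubic norm of a cubic Jordan algebra, normalised so that the seven-term relation of Figure~\ref{fig:seventerm} and the reduction rules of Figure~\ref{fig:e6reductions} hold. Send the two generating objects of $\cB$ to $V$ and $V^\ast$. Send the source and sink vertices to the symmetric trilinear form $N(x,y,z)$ and to its dual, with the scale fixed by the fundamental relation. Since $\operatorname{char}K\neq 2,3$, the polarisation is valid. Setting $\kk=K[\nn]$, this gives a $K$-linear symmetric monoidal functor from $K[\nn]\cB$ to finite-dimensional super vector spaces over $K$. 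Under it, $\nn$ is specialised to the super dimension of $V$, because the oriented loop evaluates to $\operatorname{sdim}V$.

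Next I check that this functor kills the tensor ideal of Theorem~\ref{thm:e6}. The seven-term relation is the polarisation of $N(x^\#)=N(x)^2$, which holds for any Jordan cubic norm. The bigon and square rules are the defining identities of this theory. The ideal they generate therefore maps to zero, and so does $p^{(E)}(\nn)[]$. The empty diagram maps to $1\in K$, so $p^{(E)}(\operatorname{sdim}V)=0$ in $K$, and $\operatorname{sdim}V$ is a root of $p^{(E)}$. Reading off the roots gives the set $\{27,15,9,6,3,1,0,-1,-3\}$.

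The root $\nn=0$ is not in the stated list, so I have to deal with it separately, and I expect this to be the main obstacle. As in the trace-free case, $\nn=0$ should be the degenerate point where every diagram evaluates to zero, i.e.\ the zero space; the statement presumably excludes it as not being a genuine cubic norm, or else omits it by convention. I would add a remark that $V=0$ is excluded, or check directly that a nonzero cubic norm cannot have $\nn=0$. A second subtlety is that in positive characteristic the dimension is only determined modulo $\operatorname{char}K$. Consequently the conclusion should be read as saying that the image of $\operatorname{sdim}V$ in $K$ is one of these roots, exactly as in the proof for the trace-free case.
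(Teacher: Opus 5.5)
Your core argument is the same as the paper's proof. That proof only observes that a cubic norm gives a $K$-linear symmetric monoidal functor from $K\cB$ to super vector spaces, which sends $\nn$ to the superdimension and hence sends $p^{(E)}(\nn)[]$ to $0$. The step you leave open is a genuine gap, and the paper's proof has the same gap. This argument only yields $\nn\in\{27,15,9,6,3,1,0,-1,-3\}$, because of the factor $\nn^2$ in $p^{(E)}$, and nothing in the paper excludes $\nn=0$.

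Your proposed patch does not close it. The statement lists $-1$ and $-3$, so it must be read for super vector spaces, or with $\dim V$ taken in $K$. In either reading, $\operatorname{sdim}V=0$ does not force $V=0$:
\begin{itemize}
\item it holds for any $V$ of dimension $(k|k)$;
\item as an element of $K$, it holds for an ordinary $V$ whose dimension is divisible by $p=\operatorname{char}K\ge 7$. For $p=5$ there is nothing to do, since $0=15$ in $K$.
\end{itemize}
So you would have to show that no nonzero such $V$ carries a cubic norm satisfying the relations. There is reason to doubt this is even true. The paper claims a realisation of the derivation theory at $\nn_F=-1$ (the $\OSp(1|2)$ example), and a functor from the Structure PROP to the Derivation PROP with $\nn_E=\nn_F+1$. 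Together these would seem to give a nonzero realisation at $\nn_E=0$.

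Note also that the stated list is exactly the list in the corollary to Theorem~\ref{thm:f4}, shifted by $1$ with $-3$ adjoined. That list omits the root $-1$ of $p^{(F)}$, so the absence of $0$ here looks inherited rather than proved.

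What the argument actually proves is the nine-element set. Alternatively, it proves the stated set under the extra hypothesis that $V$ is an ordinary vector space over a field of characteristic $0$. There $\nn=\dim V\ge 1$, because the unit $c$ has $N(c)=1$, so $V\neq 0$.
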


\begin{proof} A cubic norm over $K$ gives a $K$-linear symmetric monoidal functor from
$K\cB$ to the category of finite dimensional super vector spaces over $K$. Put $\kk=K[\nn]$.
The homomorphism $K[\nn]\to K$ maps $\nn$ to the super dimension of the cubic norm
and also must map $p^{(E)}(\nn)$ to 0.
\end{proof}

\begin{rem}
	Consider $K[\nn]\cB$ as a family of cubic norms over $K$ parametrised by the affine line
$\Spec(K[\nn])$. Then Theorem~\ref{thm:e6} says that this is a family parametrised by
the quotient ring $\Spec(K[\nn]/\left\langle p^{(E)}(\nn) \right\rangle)$.
\end{rem}

We evaluate oriented trivalent graphs. Let $G$ be an oriented trivalent graph. An \Dfn{evaluation} of $G$ is a relation of the form $G=p_G(\nn)\,[]$ where $p_G(\nn)\in \kk$ and $[]$ is the empty graph.
We refer to $p_G(\nn)$ as the evaluation. For example, the evaluation of the loop is $\nn$.
The evaluation of a trivalent graph is the product of the evaluations of the connected components, so we assume $G$ is connected.

Applying Corollary~\ref{cor:e6}, the normal form of an oriented trivalent graph is a
linear combination of trivalent graphs of girth at least six. The method for finding evaluations
of connected trivalent graphs is inductive on the number of vertices. There are no connected trivalent
graphs with less than 14 vertices. This implies that the normal form of a trivalent graph with less
than 14 vertices is its evaluation.

A \Dfn{oriented source} is a connected oriented graph with one vertex of valence two connected to a vertex of valence four and all other vertices of valence three.
Given an oriented source we can substitute the seven-term relation for the vertex of valence two connected to the vertex of valence four to get a relation which is a linear combination of seven trivalent oriented graphs. Note that this is well-defined as the seven term relation is invariant under permutation of four of the five boundary points, by construction.
If the source has $t$ trivalent vertices then four of the terms have $t$ vertices and three have $t+2$ vertices. The normal form of this relation is a linear combination
of connected trivalent oriented graphs of girth at least six. Furthermore, each term of the relation has at most
$t+2$ vertices.

This gives an inductive procedure for finding evaluations. Assume we have found evaluations of all
trivalent graphs of girth at least six with fewer than $t$ vertices. Given an oriented source with $t-2$
trivalent vertices we construct the relation and its normal form. The normal form is a linear combination
of trivalent oriented graphs where each term has girth at least six and at most $t$ vertices. If any term has a
connected component with fewer than $t$ vertices then, by the inductive assumption, we have found the
evaluation and we substitute the evaluation for the component. The result of these substitutions is
a linear combination of connected trivalent oriented graphs of girth at least six and precisely $t$ vertices.

\begin{ex}
	The Heawood graph is the unique (up to isomorphism) connected trivalent bipartite graph with 14 vertices.
	There is also a unique (up to isomorphism) source with 11 trivalent vertices. Substituting the 
	seven term relation gives a linear combination of seven connected trivalent bipartite graphs. One of the terms is
	the Heawood graph and the other six terms have girth at most four. Therefore the normal form of this
	relation is the evaluation of the Heawood graph.
\end{ex}

The number of trivalent oriented graphs and the number of oriented sources is shown in Table~\ref{table:e6}.
For more terms in the first sequence see \oeis{A260811}.
\begin{table}[h]
	\begin{equation*}
		\begin{array}{r|rrrrr}
			& 14 & 16 & 18 & 20 & 22 \\ \hline
			graphs & 1 & 1 & 3 & 10 & 28 \\
			sources & 1 & 1 & 6 & 47 & 406
		\end{array}
	\end{equation*}
	\caption{Trivalent oriented graphs and oriented sources}\label{table:e6}
\end{table}

We put $\kk=\bQ[\nn]$ and, for $t=14,16,18,20,22$ we generated all oriented sources
with $t-2$ trivalent vertices (up to isomorphism) and computed the linear combination of
connected trivalent oriented graphs of girth at least six and $t$ vertices. We observed that for each 
connected trivalent oriented graph there was an oriented source that gave an evaluation. For $t<22$ substituting these
evaluations in the remaining relations gave 0. However for $t=22$ 149 of the 406 sources gave a non-zero polynomial. For all oriented sources which gave a non-zero polynomial the polynomial is an associate of $p^{(E)}(\nn)$ as elements of $\bZ[1/6,\nn]$.

There is a strict rigid symmetric monoidal functor from the Structure PROP to the Derivation PROP.
This functor is $\bQ[\nn]$-linear after the substitution $\nn_E = \nn_F+1$. This predicts that
$p_F(\nn+1)$ divides $p_E(\nn)$. A direct comparison of the two polynomials gives
\[ p_E(\nn) = p_F(\nn+1)\nn(\nn+3)^2 \]
This constructs a cubic norm for all solutions of $p_E(\nn)$ except $\nn=0$ and $\nn=-3$. The functor
for $\nn=0$ sends all diagrams to 0.

We explain the factor $(\nn+3)^2$ by constructing a pair of $\kk$-linear strict rigid monoidal functors
from the PROP to the category of finitely generated free super $\kk$-modules.
Let $V$ be a free $\kk$-module of rank 3 considered as an odd super module. Since $V$ is odd, the
determinant identifies $S^3(V) \cong \kk$ and the vector cross product is an identification $S^2(V)\cong V^\ast$.
The structure group is $\SL(3,\kk)$. Since $V$ and $V^\ast$ are inequivalent representations of 
$\SL(3,\kk)$ this constructs a pair of functors.

\bibliographystyle{alphaurl}
\bibliography{F4series}

%-------------------------------------------------
\end{document}